\DeclareMathOperator{\nrd}{nrd} 
\DeclareMathOperator{\Emb}{Emb}
\DeclareMathOperator{\Nm}{Nm}
\DeclareMathOperator{\Mat}{Mat}
\DeclareMathOperator{\SL}{SL}
\DeclareMathOperator{\PSL}{PSL}
\DeclareMathOperator{\new}{new}
\DeclareMathOperator{\IS}{IS}
\newcommand{\Z}{\ensuremath{\mathbb{Z}}}
\newcommand{\R}{\ensuremath{\mathbb{R}}}
\newcommand{\Q}{\ensuremath{\mathbb{Q}}}
\newcommand{\C}{\ensuremath{\mathbb{C}}}
\newcommand{\uhp}{\ensuremath{\mathbb{H}}} 
\newcommand{\Ord}{\ensuremath{\mathrm{O}}}
\newcommand{\YO}{\ensuremath{Y_{\Ord}}}
\newcommand{\XO}{\ensuremath{X_{\Ord}}}
\newcommand{\Ep}{\ensuremath{\mathrm{E}^+}}
\newcommand{\GOPH}{\ensuremath{\Gamma_{\Ord}^{\text{PH}}}}
\newcommand{\sm}[4]{\ensuremath{\left(\begin{smallmatrix} #1 & #2\\#3 & #4\end{smallmatrix}\right)}}
\newcommand{\lm}[4]{\ensuremath{\left(\begin{matrix} #1 & #2\\#3 & #4\end{matrix}\right)}}
\newcommand{\genmtx}{\sm{a}{b}{c}{d}}
\newcommand{\qord}[1]{\ensuremath{\mathcal{O}_{#1}}} 
\newcommand{\qordd}{\qord{d}} 
\newcommand{\Int}[3]{\ensuremath{\left\langle #1, #2 \right\rangle_{#3}}}
\newcommand{\Intpm}[2]{\Int{#1}{#2}{\pm}}
\newcommand{\tl}[1]{\tilde{\ell}_{#1}}
\newtheorem{theorem}{Theorem}[section]
\newtheorem{lemma}[theorem]{Lemma}
\newtheorem{proposition}[theorem]{Proposition}
\theoremstyle{definition}
\newtheorem{definition}[theorem]{Definition}
\newtheorem{remark}[theorem]{Remark}
\declaretheorem[style=definition, name=Example, numberlike=theorem, qed=$\qedsymbol$]{example}
\numberwithin{equation}{section}
\begin{document}

\title[Intersection series]{Hecke operators acting on optimal embeddings in indefinite quaternion algebras}
\author[J. Rickards]{James Rickards}
\address{University of Colorado Boulder, Boulder, Colorado, USA}
\email{james.rickards@colorado.edu}
\urladdr{https://math.colorado.edu/~jari2770/}
\date{\today}
\thanks{I thank the anonymous reviewer for their extensive list of helpful comments. This research was supported by an NSERC Vanier Scholarship at McGill University. The author is currently partially supported by NSF-CAREER CNS-1652238 (PI Katherine E. Stange).}
\subjclass[2020]{Primary 11R52; Secondary 11F11, 11Y40, 16H05}
\keywords{Shimura curve, quaternion algebra, closed geodesic, modular form.}
\begin{abstract}
We explore a natural action of Hecke operators acting on formal sums of optimal embeddings of real quadratic orders into Eichler orders. By associating an optimal embedding to its root geodesic on the corresponding Shimura curve, we can consider the signed intersection number of pairs of embeddings. Using the Hecke operators and the intersection pairing, we construct a generating series that is demonstrated to be a classical modular form of weight two.
\end{abstract}

\maketitle

\setcounter{tocdepth}{1}
\tableofcontents

\section{Introduction}
Connecting arithmetic generating series to modular forms and L-functions has been an important area of research in the last half century. Early results include the work of Zagier on Hurwitz class numbers and mock modular forms (\cite{Zag75}), and the work of Gross-Kohnen-Zagier on heights of Heegner points (\cite{GKZ87}). Since the 1990's, the aim of the Kudla program has been to investigate relationships between these two worlds.

In \cite{DV20}, Darmon and Vonk proposed a real quadratic analogue of the difference of singular moduli, with the key ingredient being a $p-$adic intersection number of closed geodesics on a Shimura curve. In \cite{JR21shim}, arithmetic properties of these intersections were investigated, and real quadratic analogues of the factorization formulas of $\Nm(j(\tau_1)-j(\tau_2))$ (due to Gross-Zagier in \cite{GZ85}) were derived.

In this paper, we algebraically construct a Hecke operator, which acts on formal sums of optimal embeddings of real quadratic orders into a fixed Eichler order. By considering the signed intersection pairing of the corresponding geodesics on the Shimura curve, we construct a formal generating series, which is proven to be a classical modular of weight $2$ for $\Gamma_0(N)$.

\section{Overview and conventions}

From now on, fix $B$ to be an indefinite quaternion algebra over $\Q$ of discriminant $D$, and let $\Ord$ be an Eichler order in $B$ of level $M$. For an integer $r$, define
\[\Ord^r:=\{z\in \Ord:\nrd(z)=r\},\]
the set of elements in $\Ord$ of reduced norm $r$.

Let $\Emb^+(B)$ denote the set of embeddings of real quadratic fields into $B$. Any $\phi\in\Emb^+(B)$ descends to an optimal embedding of the real quadratic order $\phi^{-1}(\Ord)$ into $\Ord$. We declare $\phi$ and $\psi$ to be equivalent if there exists $x\in\Ord^1$ such that
\[\psi=\phi^x:=x\phi x^{-1}.\]
Denote the equivalence class of $\phi$ by $[\phi]$, and define $\Ep$ to be the set of equivalence classes of elements of $\Emb^+(B)$. Since $\Ord$ is fixed, it is omitted in most of the notation.

\begin{definition}
Let $\phi,\psi\in\Emb^+(B)$, and let $n$ be a positive integer. Define
\[\Theta(n):=\Ord^1\backslash\Ord^n,\]
a finite set.

If $n$ is coprime to $M$, the $n$\textsuperscript{th} weight function associated to $\phi, \psi$ is defined by
\begin{equation}\label{eq:weightdef}
w_n(\phi, \psi):=\left|\left\{\pi\in\Theta(n): \left[\phi^{\pi}\right]=[\psi]\right\}\right|.
\end{equation}
Otherwise, define $w_n(\phi, \psi)=0$.
\end{definition}

The weight function is independent of the choices of $\pi\in\Theta(n)$, is well-defined over the equivalence classes of $\phi$ and $\psi$, but is not symmetric.

\begin{definition}\label{def:hecke}
Let $\phi\in\Emb^+(B)$, and define the Hecke operator $T_n$ acting on $[\phi]$ via the formula
\[T_n[\phi]:=\displaystyle\sum_{[\psi]\in\Ep}w_n(\psi, \phi)[\psi].\]
Extend the definition linearly to $T_n:\Z[\Ep]\rightarrow\Z[\Ep]$.
\end{definition}

See Section \ref{sec:althecke} for a precise description of the discriminants and multiplicities of the terms in $T_p[\phi]$ for $p$ prime.

Fix an embedding of $B$ in $\Mat(2,\R)$, and then
\[\Gamma_{\Ord}:=\Ord^1/\{\pm 1\}\subseteq\PSL(2, \R)\]
is a discrete subgroup, which acts on the upper half plane $\uhp$. The quotient $\YO:=\Gamma_{\Ord}\backslash\uhp$ can be given the structure of a Riemann surface, called a Shimura curve. Elements of $\Ep$ biject with closed geodesics on $\YO$, and there is a well defined notion of the signed intersection number of pairs of closed geodesics. In particular, this extends linearly to the notion of a signed intersection number of two elements $\alpha_1, \alpha_2\in\Z[\Ep]$, which we denote by
\[\Int{\alpha_1}{\alpha_2}{\pm}\in\Z.\]

\begin{definition}
Let $\alpha_1, \alpha_2\in\Z[\Ep]$. The signed intersection series associated to $\alpha_1,\alpha_2$ is the formal power series
\[\IS^{\pm}_{\alpha_1,\alpha_2}(\tau):=\sum_{n=1}^{\infty}\Int{\alpha_1}{T_n\alpha_2}{\pm} q^n,\]
where $q=e^{2\pi i\tau}$.
\end{definition}

Let $S_2(\Gamma_0(n))$ be the set of cusp forms on $\Gamma_0(n)\subseteq\PSL(2, \Z)$, the Hecke congruence subgroup of level $n$. If $m\mid n$, let $S_2(\Gamma_0(n))^{m-\new}\subseteq S_2(\Gamma_0(n))$ be the forms that are new with respect to all prime divisors of $m$.

\begin{theorem}\label{thm:mainthm}
We have $\IS^{\pm}_{\alpha_1,\alpha_2}\in S_2(\Gamma_0(DM^2))^{D-\new}$.
\end{theorem}

In fact, there is a cusp form in $S_2(\Gamma_0(DM))^{D-\new}$ whose coprime to $M$ coefficients match $\IS^{\pm}_{\alpha_1,\alpha_2}$.

Section \ref{sec:background} provides background results on intersection numbers. Sections \ref{sec:atkinlehner}-\ref{sec:heckehomology} cover the main properties of $T_n$, including an alternate expression for $T_n[\phi]$, as well as showing the equivariance of $T_n$ with respect to the signed intersection pairing. Section \ref{sec:mfbackground} covers the main background on quaternionic and classical modular forms, and the pieces are all put together in Section \ref{sec:proof} to prove Theorem \ref{thm:mainthm}. The paper ends in Section \ref{sec:examples}, where we provide a few explicit examples demonstrating the results of Theorem \ref{thm:mainthm} (in particular, $\IS^{\pm}_{\alpha_1,\alpha_2}$ can be non-trivial).

\section{Intersection numbers}\label{sec:background}

The intersection numbers of pairs of optimal embeddings were studied extensively in \cite{JR21shim}, and we recall a few of the basic results.

\begin{definition}
For $\phi\in\Emb^+(B)$, define $d=d(\phi)>0$ to be the discriminant of the quadratic order $\phi^{-1}(\Ord)$. Let $\epsilon_d>1$ be the fundamental unit with positive norm in $\phi^{-1}(\Ord)$.
\end{definition}

The following proposition is classical.

\begin{proposition}\label{prop:primhypbijection}
Define $\GOPH$ to be the set of primitive hyperbolic elements of $\Gamma_{\Ord}$. Then the map $\kappa:\Emb^+(B)\rightarrow\GOPH$ given by
\[\kappa(\phi):=\phi(\epsilon_{d(\phi)})\]
is a bijection. Equivalence classes in $\Emb^+(B)$ correspond to conjugacy classes in $\GOPH$, which are denoted by $C(\GOPH)$.
\end{proposition}

Proposition \ref{prop:primhypbijection} allows us to switch between optimal embeddings and primitive hyperbolic elements, depending on which is more useful at the time.

\begin{definition}\label{def:tildephidef}
Given $\phi\in\Emb^+(B)$, let the (oriented) geodesic in $\uhp$ that connects the two real roots of $\phi(\epsilon_{d(\phi)})$ be $\ell_{\phi}$. This descends to a closed geodesic on $\YO$, denoted by $\tl{\phi}$. Any lift of $\tl{\phi}$ to $\uhp$ is a path between $\tau$ and $\phi(\epsilon_{d(\phi)})\tau$ for some $\tau\in\uhp$.
\end{definition}

The geodesic $\tl{\phi}$ is constant across the equivalence class $[\phi]$. Proposition \ref{prop:intlift} allows the picture to be lifted from $\YO$ to $\uhp$, which will be useful to demonstrate the Hecke-equivariance of the signed intersection pairing.

\begin{proposition}[Proposition 1.4 of \cite{JR21shim}]\label{prop:intlift}
Let $\phi_1,\phi_2\in\Emb^+(B)$. Every transverse intersection point $z\in\tl{\phi_1}\pitchfork\tl{\phi_2}$ lifts to a pair $(\phi_1',\phi_2')\in\Emb^+(B)\times\Emb^+(B)$ with 
\begin{enumerate}[label=(\roman*)]
\item $[\phi_i']=[\phi_i]$ for $i=1,2$;
\item $\ell_{\phi_1'}$ and $\ell_{\phi_2'}$ intersect transversely in $\uhp$.
\end{enumerate}
This association is unique up to the action of simultaneous conjugation by $\Ord^1$ on the pair $(\phi_1',\phi_2')$.
\end{proposition}

\begin{remark}
In this paper, we only consider the signed intersection number. In \cite{JR21shim}, the unsigned and $q-$weighted intersection numbers are also considered, and it is the $q-$weighted intersection that connects to the work of Darmon-Vonk in \cite{DV20}. Determining the nature of the intersection series for these alternate intersection functions is a worthwhile direction for future work.
\end{remark}

\section{Atkin-Lehner operators}\label{sec:atkinlehner}

The theory of Atkin-Lehner operators acting on optimal embeddings is much simpler than the general theory.

\begin{definition}
Let $q$ be prime with $q^e\mid\mid DM$ for some $e\geq 1$. Let $\omega\in\Ord^{q^e}$ be any element that normalizes $\Ord$. Define the Atkin-Lehner involution $W_q$ acting on $[\phi]\in\Ep$ as
\[W_q[\phi]:=[\phi^{\omega}],\]
which is independant of the choice of $\omega$. Extend the action linearly to $\Z[\Ep]$.
\end{definition}

If $q\mid D$, then $|\Theta(q)|=1$, hence $T_q=W_q$. If $q^e\mid\mid M$, then $T_q=0$, so $T_q\neq W_q$. Even if we had extended the weight function definition in Equation \eqref{eq:weightdef} to allow for $q\mid M$, it would still not coincide, as $|\Theta(q^e)|>1$ in general.

\begin{remark}\label{rem:atkinlehnerM}
The reason we take $T_n=0$ when $\gcd(n, M)>1$ is that it is hard to access the $p$\textsuperscript{th} coefficient of an oldform in $S_2(\Gamma_0(DM))^{D-\new}$ when $p\mid M$. Furthermore, this oldform may be combined with newforms in $S_2(\Gamma_0(DM))$, which further muddies the waters. We do not run into this issue with $q\mid D$, as the forms we are studying are guaranteed to be new at such $q$. See Remark \ref{rem:levelDM} for a possible resolution.
\end{remark}

\section{Prime Hecke operators}\label{sec:primehecke}

It will be convenient to sometimes restrict to studying prime Hecke operators. In order to do so, we must show that $T_n$ satisfy the standard recursive formulas. As a first step, we do this for the operators $T'_n$, defined by (for $[\phi]\in\Ep$)
\begin{equation}\label{eqn:naiveheckedef}
T'_n[\phi]:=\sum_{\pi\in\Theta(n)} [\phi^{\pi}]=\sum_{[\psi]\in\Ep}w_n(\phi, \psi)[\psi].
\end{equation}
Aside from some minor details, this is analogous to the classical case.

\begin{lemma}\label{lem:naiveheckeworks}
Let $m,n$ be positive coprime integers, and let $p\nmid DM$ be prime. The following statements are true:
\begin{enumerate}[label=(\roman*)]
\item $T'_{mn}=T'_mT'_n$;
\item $T'_{p^{k}}T'_p=T'_{p^{k+1}}+pT'_{p^{k-1}}$ for all positive integers $k$.
\end{enumerate}
\end{lemma}
\begin{proof}
If either $m$ or $n$ is not coprime to $M$, the first point is trivial. Otherwise, write
\[\Theta(m) = \bigcup_{i=1}^U\Ord^1\pi_i, \qquad \Theta(n) = \bigcup_{i=1}^V\Ord^1\pi'_i.\]
It suffices to show that the set $\{\Ord^1\pi_i\pi'_j\}_{i,j=1}^{i=U,j=V}$ is a valid and complete set of representatives for $\Theta(mn)$. First, if $x\in\Ord^{mn}$, then since $m$ and $n$ are coprime and $\Ord$ has class number $1$, we can write $x=yz$ with $\nrd(y)=m$, $\nrd(z)=n$, and $y,z\in\Ord$ (see Theorem 2.1 of \cite{SC20}). Then $z=u_1\pi'_j$ for some $j$ and $u_1\in\Ord^1$, and $x=(yu_1)\pi'_j$. We have $yu_1=u_2\pi_i$ for some $i$ and $u_2\in\Ord^1$, whence $x=u_2\pi_i\pi'_j$. Thus we have a complete set of representatives for $\Ord^1\backslash\Ord^{mn}$.

To show that they are all distinct, assume otherwise, so that $\Ord^1\pi_i\pi'_j=\Ord^1\pi_{i'}\pi'_{j'}$. Rearranging this gives $\Ord^1 \pi_i\pi'_j\pi_{j'}^{\prime -1} = \Ord^1\pi_{i'}$. If $j=j'$, then $i=i'$ and we are done. Otherwise, let $x=\pi'_j\pi_{j'}^{\prime -1}$; we have $\nrd(x)=1$ and $x\notin\Ord$ since $j\neq j'$. By taking completions, there exists a prime divisor $p$ of $n$ such that $x_p\notin\Ord_p$. Since $\nrd(\pi_i)=m$ is coprime to $n$, it is coprime to $p$, and thus $(\pi_ix)_p\notin\Ord_p$, whence $\pi_ix\notin\Ord$. But $\pi_ix\in\Ord^1\pi_{i'}\subseteq\Ord$, contradiction. Therefore $i=i'$ and $j=j'$, as claimed.

The proof of the second point is exactly the same as for $\SL(2, \Z)$, where explicit representatives of $\Gamma_0(p^e)\backslash\SL(2, \Z)$ can be used.
\end{proof}

We can use these properties of $T_n'$ to deduce them for $T_n$ as well.

\begin{proposition}\label{prop:heckeworks}
Let $m,n$ be positive coprime integers, let $p\nmid DM$ be prime, let $q\mid D$ be prime, and let $k$ be a positive integer. The following statements are true:
\begin{enumerate}[label=(\roman*)]
\item $T_{mn}=T_mT_n$;
\item $T_{p^k}T_p=T_{p^{k+1}}+pT_{p^{k-1}}$;
\item $T_{q^k}=T_q^k$.
\end{enumerate}
\end{proposition}
\begin{proof}
If either $m$ or $n$ is not coprime to $M$, the first point is trivial. Otherwise, Lemma \ref{lem:naiveheckeworks} gives $T'_{mn}[\psi]=T'_n(T'_m[\psi])$. Plug the definition into this expression and match coefficients to obtain
\[w_{mn}(\psi, \phi)=\sum_{[\theta]\in\Ep}w_m(\psi, \theta)w_n(\theta, \phi),\]
for all $[\psi]\in\Ep$. By expanding out $T_{mn}[\phi]$ and $T_m(T_n[\phi])$ in a similar fashion, this implies that they are equal, and the point follows.

The second point follows in an analogous fashion. The third point follows from $|\Theta(q^k)|=1$.
\end{proof}

\section{Alternate expression for Hecke operators}\label{sec:althecke}

We will require an alternate expression for $T_p$, as summing over all embedding classes is not convenient. Such an expression is given in Proposition \ref{prop:alternateheckeexpression}, and the rest of this section is spent proving it.

\begin{proposition}\label{prop:alternateheckeexpression}
Let $p\nmid DM$ be a prime and $\phi\in\Emb^+(B)$. Then
\[T_p[\phi]=\sum_{\pi\in\Theta(p)}\dfrac{\log{\epsilon_{d(\phi)}}}{\log{\epsilon_{d(\phi^{\pi})}}}[\phi^{\pi}].\]
\end{proposition}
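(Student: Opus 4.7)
The plan is to reduce the identity to a reciprocity between weight functions. Grouping the proposed right-hand side by equivalence class, with $D=D^{\phi}_{\Ord}$ and $D'=D^{\sigma}_{\Ord}$, rewrites it as $\sum_{[\sigma]}\frac{\log\epsilon_D}{\log\epsilon_{D'}}w_n(\phi,\sigma)[\sigma]$, while $T_n[\phi]=\sum_{[\sigma]}w_n(\sigma,\phi)[\sigma]$ by definition. Hence the proposition is equivalent to proving the reciprocity
\[w_n(\sigma,\phi)\log\epsilon_{D'}=w_n(\phi,\sigma)\log\epsilon_D\]
for every pair of classes $[\phi],[\sigma]$.

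To establish this, I would fix representatives and define $A(\phi,\sigma):=\{\pi\in\Ord_{N=n}:\pi\phi\pi^{-1}=\sigma\}$, and similarly $A(\sigma,\phi)$. Since $\bar\pi\pi=n$, the map $\pi\mapsto\bar\pi$ is a bijection $A(\phi,\sigma)\leftrightarrow A(\sigma,\phi)$ that swaps left and right $\Ord_{N=1}$-cosets (because $\bar{u\pi}=\bar\pi\,\bar u$). Unpacking the definition shows $w_n(\phi,\sigma)$ equals the number of left $\Ord_{N=1}$-cosets meeting $A(\phi,\sigma)$; composing with the bar-bijection yields that $w_n(\sigma,\phi)$ counts left cosets meeting $A(\sigma,\phi)$ while $w_n(\phi,\sigma)$ counts right cosets meeting $A(\sigma,\phi)$.

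Next pick $\pi_0\in A(\sigma,\phi)$ (if empty both sides of the reciprocity vanish). By Lemma \ref{lem:eltstab}, every $\pi\in A(\sigma,\phi)$ has the form $\pi_0\sigma(z)$ with $z\in K$; imposing $\pi\in\Ord_{N=n}$ yields $\Nm(z)=1$ and $z\in L:=\{w\in K:\pi_0\sigma(w)\in\Ord\}$, a $\Z$-lattice in $K$. The key identity $\pi_0\sigma(z)=\phi(z)\pi_0$ (a rewriting of $\pi_0\sigma\pi_0^{-1}=\phi$) gives $\qord{D}\cdot L\subseteq L$ via left multiplication by $\phi$, and symmetrically $\qord{D'}\cdot L\subseteq L$, so $L$ is a fractional ideal of the smallest order of $K$ containing both $\qord{D}$ and $\qord{D'}$; call this order $\qord{D''}$. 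The same identity translates the left action on $A(\sigma,\phi)$ by the $\phi$-centralizer $\phi(\langle\pm\epsilon_D\rangle)\subseteq\Ord_{N=1}$ into multiplication by $\langle\pm\epsilon_D\rangle$ on the norm-one set $L^1:=\{z\in L:\Nm(z)=1\}$, and analogously the right action of $\sigma(\langle\pm\epsilon_{D'}\rangle)$ becomes multiplication by $\langle\pm\epsilon_{D'}\rangle$. Hence
\[w_n(\sigma,\phi)=\bigl|L^1/\langle\pm\epsilon_D\rangle\bigr|,\qquad w_n(\phi,\sigma)=\bigl|L^1/\langle\pm\epsilon_{D'}\rangle\bigr|.\]

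To finish, write $\epsilon_D=\epsilon_{D''}^{k_1}$ and $\epsilon_{D'}=\epsilon_{D''}^{k_2}$ with positive integers $k_1,k_2$, so $\log\epsilon_D=k_1\log\epsilon_{D''}$ and $\log\epsilon_{D'}=k_2\log\epsilon_{D''}$. Since $\langle\pm\epsilon_{D''}\rangle$ acts freely on $L^1$ and the subgroups $\langle\pm\epsilon_D\rangle,\langle\pm\epsilon_{D'}\rangle$ have indices $k_1,k_2$ respectively in $\langle\pm\epsilon_{D''}\rangle$, letting $c$ denote the orbit count of $\langle\pm\epsilon_{D''}\rangle$ on $L^1$ yields $w_n(\sigma,\phi)=k_1 c$ and $w_n(\phi,\sigma)=k_2 c$. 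Multiplying, both $w_n(\sigma,\phi)\log\epsilon_{D'}$ and $w_n(\phi,\sigma)\log\epsilon_D$ equal $k_1k_2c\log\epsilon_{D''}$, proving the reciprocity. The main obstacle is the setup: writing $A(\sigma,\phi)=\pi_0\sigma(L^1)$ correctly and verifying that $L$ is a fractional $\qord{D''}$-ideal so that the two coset counts are realized as orbit counts on the same set; once this is done, the reciprocity is an index computation in the group of norm-one units.
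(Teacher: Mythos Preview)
Your argument is correct and takes a genuinely different route from the paper's proof. The paper first reduces to $n=p$ prime using the multiplicative and recursive relations for both $T_n$ and $T'_n$ (Lemmas~\ref{lem:naiveheckeworks} and~\ref{lem:heckeworks}), and then verifies the prime case by invoking the explicit case analysis of Propositions~\ref{prop:epsilontower}, \ref{prop:heckedivision}, and~\ref{prop:readheckefromgraph} (the distribution of discriminants and multiplicities in the prime power Hecke graph). In other words, the paper checks the identity by computing both sides at primes.

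Your approach instead proves, for all $n$ at once, the clean reciprocity $w_n(\sigma,\phi)\log\epsilon_{D'}=w_n(\phi,\sigma)\log\epsilon_D$, by realising both weight counts as orbit counts on the same set $L^1$ under two commensurable unit groups. This bypasses the Hecke graph entirely and gives a more conceptual explanation of why the logarithmic weights are exactly right: they are the indices $[\langle\pm\epsilon_{D''}\rangle:\langle\pm\epsilon_D\rangle]$ and $[\langle\pm\epsilon_{D''}\rangle:\langle\pm\epsilon_{D'}\rangle]$. One small comment: you do not actually need $L$ to be a \emph{proper} fractional $\qord{D''}$-ideal (its multiplier ring could be strictly larger); all that your argument uses is that $\qord{D''}$, and hence $\langle\pm\epsilon_{D''}\rangle$, stabilises $L$, which you verify directly. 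The finiteness of $c$ follows a posteriori from the finiteness of $w_n(\sigma,\phi)$ and the finite index $k_1$.
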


Proposition \ref{prop:alternateheckeexpression} is also true with $p$ replaced by an integer coprime to $M$, though we will not need this level of generality.

To begin, we first shift the definition of $T_p$ to being over $\pi\in\Theta(p)$, and not over $[\psi]\in\Ep$.

\begin{lemma}\label{lem:firstalternate}
Let $p\nmid DM$ be a prime and $\phi\in\Emb^+(B)$. Then
\[T_p[\phi]=\sum_{\pi\in\Theta(p)}\dfrac{w_p(\phi^{\pi}, \phi)}{w_p(\phi, \phi^{\pi})}[\phi^{\pi}].\]
\end{lemma}
\begin{proof}
Recall that
\[T_p[\phi]=\displaystyle\sum_{[\psi]\in\Ep}w_p(\psi, \phi)[\psi].\]
It is clear that $w_p(\psi, \phi)>0$ if and only if $w_p(\phi, \psi)>0$, so every embedding $[\psi]$ with non-zero coefficient must be of the form $[\phi^{\pi}]$ for some $\pi\in\Theta(p)$. Thus we can rewrite $T_p$ by summing over $[\phi^{\pi}]$, making sure to divide by the amount we overcount each $[\phi^{\pi}]$. This proves the lemma.
\end{proof}

By Lemma \ref{lem:firstalternate}, it suffices to show that $w_p(\phi, \phi^{\pi})$, relates to fundamental units.

\begin{definition}
If $d$ is a discriminant and $p$ is a prime such that $\frac{d}{p^2}$ is not a discriminant, we say $d$ is $p-$fundamental.
\end{definition}

The following proposition describes the behaviour of $[\phi^{\pi}]$ for $\pi\in\Theta(p)$.

\begin{proposition}\label{prop:heckedivision}
Let $\phi\in\Emb^+(B)$ correspond to an optimal embedding of discriminant $d=d(\phi)$. Let $p$ be a prime with $p\nmid DM$, and write $d=p^{2k}d'$, where $d'$ is a $p-$fundamental discriminant. Consider the multiset of $p+1$ optimal embeddings classes corresponding to $\{[\phi^{\pi}]:\pi\in\Theta(p)\}$. This contains
\begin{itemize}
\item $p+1$ optimal embeddings of discriminant $p^2d$ if $k=0$ and $\left(\frac{d}{p}\right)=-1$.
\item $p$ optimal embeddings of discriminant $p^2d$ and one of discriminant $d$ if $k=0$ and $\left(\frac{d}{p}\right)=0$.
\item $p-1$ optimal embeddings of discriminant $p^2d$ and two of discriminant $d$ if $k=0$ and $\left(\frac{d}{p}\right)=1$.
\item $p$ optimal embeddings of discriminant $p^2d$ and one of discriminant $\frac{d}{p^2}$ if $k>0$.
\end{itemize}
Let $\epsilon_{p^2d}=\epsilon_d^r$, and the optimal embeddings of discriminant $p^2d$ divide into $\frac{p-\left(\frac{d}{p}\right)}{r}$ distinct equivalence classes, each with multiplicity $r$.
\end{proposition}
\begin{proof}
Assume that $p$ is odd; $p=2$ is covered in Proposition \ref{prop:heckedivision2}. To compute the discriminants, it suffices to work in the completion at $p$. In particular, we can assume that $\Ord_p=\Mat(2,\Z_p)$, $\phi_p(\sqrt{d})=\sm{0}{d}{1}{0}$, and we can take the following as representatives for $\Theta(p)$:
\[\pi_i=\lm{1}{i}{0}{p}:i=0,1,\cdots,p-1,\qquad\pi_{\infty}=\lm{p}{0}{0}{1}.\]
We compute
\[\phi^{\pi_{\infty}}(\sqrt{d})=\lm{0}{pd}{\frac{1}{p}}{0},\]
which is an optimal embedding of $p^2d$ in all cases. For $i<\infty$,
\[\phi^{\pi_i}(\sqrt{d})=\lm{i}{\frac{d-i^2}{p}}{p}{-i}.\]
If $p$ is inert with respect to $d$, then $\frac{d-i^2}{p}\notin\Z_p$, always giving discriminant $p^2d$. If $p\mid d$, we have discriminant $p^2d$ for all $i\neq 0$. When $i=0$, we get $d$ or $\frac{d}{p^2}$, depending on if $p^2\mid d$ or not. Finally, if $p$ is split with respect to $d$, then precisely two values of $i$ allow this to lie in $\Mat(2,\Z_p)$, and we get $2$ embeddings of discriminant $d$ and $p-2$ of $p^2d$. Therefore the discriminants occur as claimed.

Next, we check when we get similar embeddings of discriminant $p^2d$. Let $v=\phi(\epsilon_d)\in\Ord^1$, fix $i$, and let $\pi_iv=u\pi_j$ for some (unique) $j$ and $u\in\Ord^1$. Then
\[\phi^{\pi_j}=\pi_j\phi \pi_j^{-1}\sim u\pi_j\phi \pi_j^{-1}u^{-1}=\pi_iv\phi v^{-1}\pi_i^{-1}=\pi_i\phi \pi_i^{-1}=\phi^{\pi_i},\]
i.e. the resulting forms lie in the same equivalence class. We wish to show that in the discriminant $p^2d$ case this is also essentially necessary, i.e. if $\phi^{\pi_i}\sim\phi^{\pi_j}$ are embeddings of discriminant $p^2d$, then $\pi_iv^k=u\pi_j$ for some integer $k$ and $u\in\Ord^1$.

Indeed, $[\phi^{\pi_i}]=[\phi^{\pi_j}]$ if and only if there is a $u\in\Ord^1$ for which $\pi_i\phi \pi_i^{-1}=u^{-1}\pi_j\phi \pi_j^{-1}u$. Rearranging, this is equivalent to
\[\pi_j^{-1}u\pi_i\phi(\sqrt{d})(\pi_j^{-1}u\pi_i)^{-1}=\phi(\sqrt{d}).\]
In particular, $\pi_j^{-1}u\pi_i$ normalizes $\phi(\sqrt{d})$, and it follows from Proposition 7.7.8 of \cite{JV21} that
\[\pi_j^{-1}u\pi_i=\phi(x+y\sqrt{d})\text{ for $x,y\in\Q$.}\]
After rearranging, this is equivalent to $\pi_j\phi(x+y\sqrt{d})\pi_i^{-1}\in\Ord^1$. Taking norms, we see that $x^2-dy^2=1$, whence we are done if we can show that $z=x+y\sqrt{d}\in\qordd$, the order of discriminant $d$. Since $\phi(pz)=\overline{\pi_j}u\pi_i\in\Ord$, we have $z\in\frac{1}{p}\mathcal{O}_d$, and it suffices to look at the completion at $p$.

In this completion, we can take the explicit forms of $\pi_i$ and $\phi$ as above. Thus $\phi(x+y\sqrt{d})=\sm{x}{yd}{y}{x}$. If $i,j<\infty$, then
\[\pi_j\phi(x+y\sqrt{d})\pi_i^{-1}=\lm{x+jy}{\frac{(j-i)x+(d-ij)y}{p}}{py}{x-iy}\in \Mat(2,\Z_p).\]
From above, $px,py\in\Z _p$, so write $X=px,Y=py$. Then 
\[p\mid X+jY,\qquad p^2\mid (j-i)X+(d-ij)Y,\]
and looking at the second equation modulo $p$, we derive
\[0\equiv (j-i)(-jY)+(d-ij)Y\equiv (d-j^2)Y\pmod{p}.\]
Since we have embeddings of discriminant $p^2d$, $d-j^2\not\equiv 0\pmod{p}$, whence $p\mid Y$, and so $p\mid X$, as desired.

If $i=\infty$, then $j<\infty$, and we have
\[\pi_j\phi(x+y\sqrt{d})\pi_{\infty}^{-1}=\lm{\frac{x+jy}{p}}{yd+jx}{y}{px}\in \Mat(2,\Z_p).\]
It immediately follows that $y\in\Z _p$, and then $x\in\Z_p$ too, as desired.

Now, we see that we form equivalence classes by right multiplication by $v=\phi(\epsilon_d)$. Thus the size of an orbit corresponds to the minimal $k$ such that $\pi_iv^k=u\pi_i$, for some $u\in\Ord^1$. Writing $v^k=\phi(X+Y\sqrt{d})$, in the above calculations we can take $i=j$ (as well as repeating for $i=j=\infty$), and it follows that $\pi_iv^k=u\pi_i$ if and only if $p\mid Y$. The smallest such $k$ is $k=r$, since $p\mid Y$ is equivalent to $X+Y\sqrt{D}\in\qord{p^2d}$.
\end{proof}

When $p=2$, the above proof needs to be modified a bit. For sake of clarity, we restate the proposition explicitly before giving the proof.

\begin{proposition}\label{prop:heckedivision2}
Let $\phi\in\Emb^+(B)$ correspond to an optimal embedding of discriminant $d=d(\phi)$. Assume $2\nmid DM$, and write $d=2^{2k}d'$, where $d'$ is a $2-$fundamental discriminant. Consider the multiset of $3$ optimal embeddings classes corresponding to $\{[\phi^{\pi}]:\pi\in\Theta(2)\}$. This contains
\begin{itemize}
\item $3$ optimal embeddings of discriminant $4d$ if $k=0$ and $d\equiv 5\pmod{8}$.
\item $2$ optimal embeddings of discriminant $4d$ and $1$ of discriminant $d$ if $k=0$ and $d\equiv 0\pmod{2}$.
\item $1$ optimal embedding of discriminant $4d$ and $2$ of discriminant $d$ if $k=0$ and $d\equiv 1\pmod{8}$.
\item $2$ optimal embeddings of discriminant $4d$ and $1$ of discriminant $\frac{d}{4}$ if $k>0$.
\end{itemize}
Let $\epsilon_{4d}=\epsilon_d^r$, and the optimal embeddings of discriminant $4d$ divide into $\frac{2-\left(\frac{d}{2}\right)}{r}$ distinct equivalence classes, each with multiplicity $r$.
\end{proposition}
\begin{proof}
We mostly mirror the proof of Proposition \ref{prop:heckedivision}. We can work locally, so that $\Ord_2=\Mat(2,\Z_2)$, and we can assume that
\[\phi_2(\sqrt{d})=\lm{p_d}{(d-p_d)/2}{2}{-p_d},\]
where $p_d$ is the parity of $d$. We can take representatives for $\Theta(2)$ as
\[\pi_i=\lm{1}{i}{0}{2}:i=0,1,\qquad\pi_{\infty}=\lm{2}{0}{0}{1}.\]
We compute
\[\phi^{\pi_{\infty}}(\sqrt{d})=\lm{p_d}{d-p_d}{1}{-p_d},\]
which is an optimal embedding of discriminant $4d$. For $i=0,1$,
\[\phi^{\pi_i}(\sqrt{d})=\lm{p_d+2i}{\frac{d-p_d}{4}-p_di-i^2}{4}{-2i-p_d}.\]
If $d\equiv 5\pmod{8}$, the top right coefficient is odd for $i=0,1$, whence this is an optimal embedding of discriminant $4d$. If $d\equiv 1\pmod{8}$, these are optimal of discriminant $d$ for $i=0,1$. Finally, if $d$ is even, then the top right coefficient is $d/4-i^2$ which is odd and even for the two choices of $i$. Since all other coefficients are even, this will be an optimal embedding of discriminant $4d$ for exactly one of the two choices of $i$, and an embedding of discriminant $d$ for the other. The only way the embedding of discriminant $d$ is not optimal is if either $16\mid d$ and $i$ is even, or $d\equiv 4\pmod{16}$ and $i=1$. In both of these cases the embedding is optimal of discriminant $d/4$, and these cases are equivalent to $k>0$. Therefore the discriminants occur as claimed.

Next, we check when we get similar embeddings of discriminant $4d$. Let $v=\phi(\epsilon_d)\in\Ord^1$, fix $i$, and let $\pi_iv=u\pi_j$ for some $j$ and $u\in\Ord^1$. As before, $\phi^{\pi_j}\sim\phi^{\pi_i}$, and we want to show that if this equation holds then $\pi_iv^k=u\pi_j$ for some integer $k$ and $u\in\Ord^1$.

As in Proposition \ref{prop:heckedivision}, this rearranges to $\pi_j\phi(x+y\sqrt{d})\pi_i^{-1}=u\in\Ord^1$ for some rationals $x,y$. Taking norms, $x^2-dy^2=1$, whence we are done if we can show that $z=x+y\sqrt{d}\in\qordd$. As $\phi(2z)=\overline{\pi_j}u\pi_i\in\Ord$, we have $z\in\frac{1}{2}\mathcal{O}_d$, hence $4x,4y\in\Z$. Take the explicit forms of $\pi_i$ and $\phi$ as above; in particular,
\[\phi(x+y\sqrt{d})=\lm{x+p_dy}{y(d-p_d)/2}{2y}{x-p_dy}.\]
If $i,j\in\{0,1\}$, we can assume they are distinct, hence $i=0,j=1$, and $d\equiv 5\pmod{8}$ (as the embeddings have discriminant $4d$). Then
\[\pi_j\phi(x+y\sqrt{d})\pi_i^{-1}=\lm{x+3y}{\frac{1}{2}x+\frac{d-3}{4}y}{4y}{x-y}\in\Mat(2,\Z_2).\]
Write $4x=X$ and $4y=Y$, and this implies that
\[X\equiv Y\pmod{4},\qquad 2X+(d-3)Y\equiv 0\pmod{16},\qquad X^2-dY^2=16.\]
If $X$ is odd, then $Y$ is odd, hence $0\equiv X^2-dY^2\equiv 1-5\equiv 4\pmod{8}$, contradiction. Thus $X,Y$ are even, and $X/2\equiv Y/2\pmod{2}$. Since $z=\frac{(X/2)+(Y/2)\sqrt{d}}{2}$, this implies that $z\in\qordd$, as required.

If $i=\infty$ and $j=0,1$, we have
\[\pi_j\phi(x+y\sqrt{d})\pi_{\infty}^{-1}=\lm{\frac{x}{2}+\frac{(p_d+2j)y}{2}}{jx+((d-p_d)/2-p_dj)y}{2y}{2x-2yp_d}\in \Mat(2,\Z_2).\]
Thus $2x,2y\in\Z$, write $2x=X$ and $2y=Y$, and it requires to show that $X\equiv Yd\pmod{2}$. But $X^2-dY^2=4$, so the conclusion follows.

The finish is exactly as in Proposition \ref{prop:heckedivision}.
\end{proof}

We can now prove the alternate expression for $T_p$.

\begin{proof}[Proof of Proposition \ref{prop:alternateheckeexpression}]
By Lemma \ref{lem:firstalternate},
\[T_p[\phi]=\sum_{\pi\in\Theta(p)}\dfrac{w_p(\phi^{\pi}, \phi)}{w_p(\phi, \phi^{\pi})}[\phi^{\pi}].\]
Let $d=d(\phi)$, and by Proposition \ref{prop:heckedivision}, the terms $[\phi^{\pi}]$ all have discriminant $p^2d, d, d/p^2$.

Start with the terms having discriminant $p^2d$. Let $\epsilon_{p^2d}=\epsilon_d^r$, and then Proposition \ref{prop:heckedivision} says that $w_p(\phi, \phi^{\pi})=r$. Similarly, $w_p(\phi^{\pi}, \phi)=1$, as we decreased the discriminant. Therefore we have
\[\dfrac{w_p(\phi^{\pi}, \phi)}{w_p(\phi, \phi^{\pi})}=\dfrac{1}{r}=\dfrac{\log{\epsilon_{d(\phi)}}}{\log{\epsilon_{d(\phi^{\pi})}}},\]
as desired.

For the terms of discriminant $d$, Proposition \ref{prop:heckedivision} implies that
\[w_p(\phi^{\pi}, \phi)=w_p(\phi, \phi^{\pi})\in\{1, 2\},\]
as desired.

Finally, the terms of discriminant $d/p^2$ can be handled analogously to $p^2d$, completing the proof.
\end{proof}

\section{Hecke operators acting on homology}\label{sec:heckehomology}

For the rest of this paper, assume that $D>1$ (see Remark \ref{rem:m2q} for changes to the $D=1$ case). In this case, there are no cusps, and $\XO:=\Gamma_{\Ord}\backslash\overline{\mathbb{H}}=\YO$. We can transfer the Hecke operators to act on homology via the association of $\phi\rightarrow\tl{\phi}$ from Definition \ref{def:tildephidef}. It is useful to switch from $\C[\Ep]$ to $\C[C(\GOPH)]$, which is accomplished through the bijection $\kappa$ from Proposition \ref{prop:primhypbijection}.

\begin{definition}
Let $\gamma\in\Gamma_{\Ord}$, and denote by $\tl{\gamma}\in H_1(\XO, \C)$ the image of the geodesic between $\tau$ and $\gamma\tau$, which is independant of $\tau\in\uhp$.
\end{definition}

Consider the map $\eta:\C[C(\GOPH)]\rightarrow H_1(\XO, \C)$ induced by
\[\gamma\rightarrow \tl{\gamma},\]
where $\gamma\in\GOPH$.

\begin{lemma}\label{lem:alphasurjective}
The elements $\tl{\gamma}$ for $\gamma\in\GOPH$ generate $H_1(\XO, \C)$. In particular, $\eta$ is surjective.
\end{lemma}
\begin{proof}
The lemma is clearly true if we allow all $\gamma\in\Gamma_{\Ord}$. We may restrict to $\gamma\in\GOPH$ since the elements pairing the sides of a Dirichlet domain for $\Gamma_{\Ord}$ are primitive, hyperbolic, and generate $\Gamma_{\Ord}$ (see \cite{JV09}).
\end{proof}

Allowing for norm one elements that have a power in $\Ord$ will make our life easier, as conjugation of elements of $\GOPH$ by $\pi\in\Theta(p)$ will produce such elements. To this end, we have the following definition.

\begin{definition}
Let $\gamma\in B^{\times}$, and assume that $\gamma^r\in\Gamma_{\Ord}$ for some $r\in\Z^+$. Define
\[\tl{\gamma}:=\dfrac{1}{r}\tl{\gamma^r}\in H_1(\XO, \C),\]
which is independant of $r$.
\end{definition}

With this convention, the induced action of Hecke operators on $H_1(\XO, \C)$ takes a particularly nice form.

\begin{proposition}\label{prop:heckeonhomology}
Let $p\nmid DM$ be a prime and let $\gamma\in\GOPH$. Then
\[\eta(T_p[\gamma])=\sum_{\pi\in\Theta(p)}\tl{\pi\gamma\pi^{-1}}.\]
\end{proposition}
\begin{proof}
By Proposition \ref{prop:primhypbijection}, write $\gamma=\phi(\epsilon_d)$ for some $\phi\in\Emb^+(B)$ and $d=d(\phi)$. From Proposition \ref{prop:alternateheckeexpression},
\[\eta(T_p[\gamma])=\sum_{\pi\in\Theta(p)}\dfrac{\log{\epsilon_{d}}}{\log{\epsilon_{d(\phi^{\pi})}}}\eta([\phi^{\pi}]).\]

For $\pi\in\Theta(p)$, let $d'=d(\phi^{\pi})$. Note that $\epsilon_{d'}=\epsilon_d^r$ for some $r$ that is either an integer or the reciprocal of an integer, since $\phi^{-1}(\Ord)$ and $\left(\phi^{\pi}\right)^{-1}(\Ord)$ are orders in the same quadratic field. In particular,
\[\phi^{\pi}(\epsilon_{d'})=\pi\phi(\epsilon_d^r)\pi^{-1}=(\pi\gamma\pi^{-1})^r,\]
hence
\[\eta([\phi^{\pi}])=\tl{(\pi\gamma\pi^{-1})^r}=r\tl{\pi\gamma\pi^{-1}}.\]
The coefficient
\[\dfrac{\log{\epsilon_{d}}}{\log{\epsilon_{d'}}}=\dfrac{1}{r},\]
which cancels with $r$, giving the result.
\end{proof}

In order to prove that the signed intersection pairing is Hecke-equivariant, we shift back to the original definition of $T_n$.

\begin{proposition}\label{prop:heckeselfadjoint}
For all positive integers $n$ and $\alpha_1,\alpha_2\in\C[\Ep]$, we have
\[\Intpm{T_n\alpha_1}{\alpha_2}=\Intpm{\alpha_1}{T_n\alpha_2}\]
\end{proposition}
\begin{proof}
It suffices to prove this proposition for $n=p\nmid M$ a prime and $\alpha_i=[\phi_i]\in\Emb^+(\Ord)$. Write $\Theta(p)=\displaystyle\cup_{i=1}^{U}\Ord^1\pi_i$ ($U=1$ if $p\mid\mathfrak{D}$ and $=p+1$ otherwise), and define the set
\[S_1=\{(\pi_i,u,\phi):|\ell_{\phi}\pitchfork\ell_{\phi_2}|=1, \phi^{\pi_i}=\phi_1^{u},u\in\Ord^1\}.\]
Use
\[T_p[\phi_1]=\sum_{[\psi]}w_p(\psi,\phi_1)[\psi],\]
and expand out $\Intpm{T_p[\phi_1]}{[\phi_2]}$. We claim that each term corresponds to an element of $S_1$. 

By Proposition \ref{prop:intlift}, an intersection of $[\psi]$ with $[\phi_2]$ corresponds to the simultaneous equivalence class of the pair $(\psi^v,\phi_2)$ with $v\in\Ord^1$ and $|\ell_{\psi^v}\pitchfork\ell_{\phi_2}|=1$. Note that $w_p(\psi,\phi_1)=w_p(\psi^v,\phi_1)$, so for each of the $w_p(\psi, \phi_1)$ values of $i$ such that $\psi^{\pi_iv}=\phi_1^u$ with $u\in\Ord^1$, we associate the triple
\[(\pi_i, u, \psi^v)\in S_1\]
to the intersection.

Since there were several choices made, we want to determine all possible triples associated to an intersection in $S_1$, so that we can create a bijection with a quotient of $S_1$ by an equivalence relation. Write $r_i=\phi_i(\epsilon_{d(\phi_i)})$ for $i=1,2$, and then the pair $(\psi^v,\phi_2)$ is well defined up to simultaneous conjugation by powers of $r_2$. Furthermore, $u$ is defined up to multiplication on the right by powers of $r_1$. In particular, let $k_1,k_2\in\Z$, write
\[\pi_ir_2^{-k_2}=\delta_i\pi_{i^*},\]
for a unique $\pi_{i^*}$ and $\delta_i\in\Ord^1$, and define an equivalence relation on $S_1$ via
\[(\pi_i,u,\phi)\sim_{S_1}(\pi_{i^*},\delta_i^{-1}ur_1^{k_1},\phi^{r_2^{k_2}}).\]
This relation corresponds exactly to the ambiguity described above in associating an element of $S_1$ to $\Intpm{T_p([\phi_1])}{[\phi_2]}$. Therefore
\[\text{Intersections of $T_p[\phi_1]$ with $[\phi_2]$}\Leftrightarrow\text{ $S_1/\sim_{S_1}$}.\]

Define $S_2$ and the equivalence relation $\sim_{S_2}$ in the analogous fashion, i.e. with all indices $1,2$ swapped. In the exact same manner, we have that intersections of $[\phi_1]$ with $T_p[\phi_2]$ biject naturally with $S_2/\sim_{S_2}$.

Let $(\pi_i,u,\phi)\in S_1$, and let $j,v$ be uniquely defined so that
\[p\pi_i^{-1}u=v^{-1}\pi_j,\]
where $v\in\Ord^1$. We define the map $\theta:S_1\rightarrow S_2$ via
\[\theta((\pi_i,u,\phi))=\left(\pi_j,v,\phi_2^{\pi_j^{-1}v}\right).\]
First, we check that the image lands in $S_2$. Use the shorthand notation $(\psi_1,\psi_2)$ for ``$\ell_{\psi_1}$ and $\ell_{\psi_2}$ intersect transversely,'' and since M\"{o}bius maps preserve intersection,
\[(\phi,\phi_2)\Rightarrow \left(\phi^{u^{-1}\pi_i},\phi_2^{u^{-1}\pi_i}\right)=\left(\phi_1,\phi_2^{\pi_j^{-1}v}\right).\]
Since $(\phi_2^{\pi_j^{-1}v})^{\pi_j}=\phi_2^v$, the image lands in $S_2$. The sign of the intersection is also preserved, since $\nrd(u^{-1}\pi_i)=p>0$.

Let $\theta':S_2\rightarrow S_1$ be the analogously defined map going the other way (swap $1$'s and $2$'s), and it is straightforward to check that $\theta,\theta'$ are inverses to each other, whence $S_1$ bijects with $S_2$. To complete the proposition, it suffices to check that $\theta$ descends to a map from $S_1/\sim_{S_1}$ to $S_2/\sim_{S_2}$ (the map $\theta'$ will do the same in analogous fashion).

Take the equations
\begin{align*}
\pi_ir_2^{-k_2}= & \delta_i\pi_{i^*}, & p\pi_i^{-1}u= & v^{-1}\pi_j, & p\pi_{i^*}^{-1}\delta_i^{-1}ur_1^{k_1} = & v'^{-1}\pi_{j'};\\
\theta((\pi_i,u,\phi))= & \left(\pi_j,v,\phi_2^{\pi_j^{-1}v}\right), & & & \theta((\pi_{i^*},\delta_i^{-1}ur_1^{k_1},\phi^{r_2^{k_2}}))= & \left(\pi_{j'},v',\phi_2^{\pi_{j'}^{-1}v'r_2^{k_2}}\right),
\end{align*}
and we need to show that the right hand side of the bottom two equations are equivalent under $S_2$. Rearranging the above equations gives
\begin{align*}
\pi_{j'}r_1^{-k_1}= & v'p(\pi_{i^*}^{-1}\delta_i^{-1})u\\
= & v'r_2^{k_2}(p\pi_i^{-1}u)\\
= & (v'r_2^{k_2}v^{-1})\pi_j.
\end{align*}
Therefore
\begin{align*}
\left(\pi_{j'},v',\phi_2^{\pi_{j'}^{-1}v'r_2^{k_2}}\right)\sim_{S_2} & \left(\pi_j,(v'r_2^{k_2}v^{-1})^{-1}v'r_2^{k_2},\phi_2^{r_1^{k_1}\pi_{j'}^{-1}v'r_2^{k_2}}\right)\\
= & \left(\pi_j,v,\phi_2^{\pi_j^{-1}v}\right),
\end{align*}
as claimed.
\end{proof}

\begin{remark}\label{rem:qweightalsoworks}
With a closer analysis, one can show that the Hecke operators are also equivariant for the $q-$weighed intersection number (considered in \cite{JR21shim}) when $q\mid DM$. If $q\nmid DM$, then this may fail for the operator $T_q$.
\end{remark}

\section{Modular form background}\label{sec:mfbackground}

Before delving into the proof of Theorem \ref{thm:mainthm}, we recall the relevant bits of quaternionic and classical modular form theory.

Our reference for quaternionic modular forms is sections 3, 5 of \cite{DemV13}, and Sections 2, 3 of \cite{GV11}. For uniformity of presentation, assume that $D>1$ and $B$ is embedded in $\Mat(2, \R)$.

For $\gamma=\genmtx\in B^{\times}/\{\pm 1\}$ and a holomorphic function $f:\mathbb{H}\rightarrow\C$, define the (weight two) slash operator as
\[(f|\gamma)(z):=\det(\gamma)(cz+d)^{-2}f(\gamma z).\]

\begin{definition}
A quaternionic modular form of weight $2$ and level $M$ for $B$ is a holomorphic function $\mathbb{H}\rightarrow\C$ such that
\[(f|\gamma)(z)=f(z)\]
for all $\gamma\in\Gamma_{\Ord}$.

Let $\text{M}_2^{B}(\Ord)$ denote the space of weight two quaternionic modular forms with respect to $\Ord$, and $S_2^{B}(\Ord)$ the subset of cusp forms. Since there are no cusps, all quaternionic modular forms are cusp forms.
\end{definition}

It is possible to define Hecke operators acting on quaternionic modular forms. Integration gives the Hecke-equivariant Eichler-Shimura isomorphism to the dual of the homology:
\[S_2^{B}(\Ord)\oplus\overline{S_2^{B}(\Ord)}\xrightarrow[]{\sim}H_1(\XO, \C)^*.\]
The connection to classical modular forms comes from the Hecke-equivariant Jacquet-Langlands correspondence:
\[S_2^{B}(\Ord)\simeq S_2(\Gamma_0(DM))^{D-\new}.\]

In order to connect coefficients of forms in $S_2(\Gamma_0(DM))^{D-\new}$ to Hecke operators, we recall a few of the main results of Atkin and Lehner. A special case of Theorem 3 of \cite{AL70} is the following proposition.

\begin{proposition}\label{prop:classicalfcoeffs}
Let $f(\tau)=\sum_{n=1}^{\infty}a_nq^n$ be a weight $2$ newform on $\Gamma_0(N)$, normalized so that $a_1=1$. Then
\begin{enumerate}[label=(\roman*)]
\item If $p$ is a prime with $p\nmid N$, then 
\begin{enumerate}
\item $f|T_p=a_pf$;
\item $a_{np}=a_na_p-pa_{n/p}$ for all $n\geq 1$, with $a_{n/p}=0$ if $p\nmid n$.
\end{enumerate}
\item If $q$ is a prime with $q^e\mid\mid N$ for some $e>0$, then
\begin{enumerate}
\item $f|W_q=\lambda(q)f$, where $\lambda(q)=\pm 1$.
\item $a_{nq}=a_na_q$ for all $n\geq 1$;
\item If $e\geq 2$, then $a_q=0$;
\item If $e=1$, then $a_q=-\lambda(q)$, hence $f|W_{q}=-a_qf$.
\end{enumerate}
\end{enumerate}
\end{proposition}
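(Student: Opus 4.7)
Since the statement is quoted as a special case of Theorem~3 of \cite{AL70}, the most direct proof is to invoke that reference. Nevertheless, here is how I would prove it from first principles.

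The plan rests on two foundational inputs from Atkin-Lehner theory. First, the Hecke algebra generated by $\{T_n : (n,N)=1\}$ acts semisimply on $S_2(\Gamma_0(N))^{\text{new}}$, and strong multiplicity one guarantees that every normalized newform $f$ spans a common eigenline. Second, the operators $U_p$ (for $p \mid N$) and $W_{p^e}$ (for $p^e \mid\mid N$) commute with each such $T_n$ and preserve the new subspace, so by multiplicity one they must act on $f$ as scalars. I would take both of these as granted; they are the core content of \cite{AL70}.

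For part (i), the first step is to write out the explicit action on $q$-expansions, namely
\[ (T_p f)(\tau) = \sum_{n \geq 1} a_{np}\, q^n + p \sum_{n \geq 1} a_n\, q^{np} \qquad (p \nmid N), \]
equate with $\mu_p f$ (the scalar promised by multiplicity one), and read off the coefficient of $q$ to identify $\mu_p = a_p$, proving (i)(a). Matching the coefficient of $q^n$ on both sides of $T_p f = a_p f$ then gives the recursion $a_{np} + p\, a_{n/p} = a_n a_p$ of (i)(b).

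For part (ii), the involution property $W_{p^e}^2 = \mathrm{Id}$ on newforms yields $\lambda(p) \in \{\pm 1\}$. The simpler expansion $(U_p f)(\tau) = \sum_n a_{np}\, q^n$ combined with the eigenform relation $U_p f = a_p f$ immediately gives (ii)(b). The genuine content lies in (ii)(c) and (ii)(d), where one needs an identity intertwining $U_p$ and $W_{p^e}$ on the new subspace. For $e = 1$, one establishes the relation $U_p + W_p = 0$ on newforms (as an eigenvalue identity), whence $a_p = -\lambda(p)$; this is typically proved via a trace argument against oldforms at level $N/p$, combined with the $W_p$-compatibility of the old/new decomposition. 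For $e \geq 2$, one argues by contradiction: a nonzero $U_p$-eigenvalue would let one construct an oldform at level $N$ arising from a form at level $N/p$ with the same system of $T_n$-eigenvalues ($p \nmid n$) as $f$, contradicting newness. This last step is the main obstacle --- it is precisely where the definition of ``new'' does real work, and it is the heart of Atkin-Lehner's analysis of the local Euler factors at primes dividing the level.
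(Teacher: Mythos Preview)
The paper does not prove this proposition at all; it simply records it as a special case of Theorem~3 of \cite{AL70} and moves on. Your proposal correctly identifies this and then goes further by sketching the standard Atkin--Lehner argument, which is accurate in outline: multiplicity one forces $T_p$, $U_p$, and $W_{p^e}$ to act as scalars on a newform, the $q$-expansion formulas give (i)(b) and (ii)(b) immediately, and the nontrivial parts (ii)(c)--(d) come from the interaction of $U_p$ with $W_p$ and the definition of newness. So your approach subsumes the paper's (which is just a citation) and your added sketch is sound.
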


\begin{remark}\label{rem:JLatkinlehner}
In the Jacquet-Langlands correspondence, the Atkin-Lehner operators $W_q$ for $q\mid D$ acting on Shimura curves in fact pick up the Eigenvalue $a_q$, and not $\lambda(q)=-a_q$ (see Theorem 1.2 of \cite{BD96}). This is why we did not need to negate the definition of $W_q=T_q$ acting on optimal embeddings!
\end{remark}

We will be working with the space $S_2(\Gamma_0(DM))^{D-\new}$, hence if $M\neq 1$ we also need to work with oldforms. Theorem 5 of \cite{AL70} provides the description of the new and oldforms, restated as follows.

\begin{proposition}\label{prop:classmfdecomp}
The space $S_2(\Gamma_0(N))$ has a basis which is a direct sum of classes, which consist of newclasses and oldclasses. Every form in a class has the same eigenvalues for $T_p$ with $p$ a prime not dividing $N$, and forms in different classes have distinct eigenvalues at $T_p$ for infinitely many primes $p$. Each newclass consists of a single form, which is an eigenform for all $T_p$ and $W_q$. Each oldclass consists of a set of forms $\{f(d\tau)\}$, where $f\in S_2(\Gamma_0(N'))^{\new}$ for some $N'$ dividing $N$ properly, and $d$ ranges over all positive divisors of $N/N'$. Furthermore, any such set is an oldclass.
Each oldclass can be given an alternate basis where the forms are also eigenforms for all $W_q$.
\end{proposition}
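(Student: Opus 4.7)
The plan is to reduce the statement to the content of Atkin--Lehner \cite{AL70}, whose proof it essentially restates; the work is in organizing that theory into the precise grouping into classes asserted here. First I would fix the Petersson inner product on $S_2(\Gamma_0(N))$ and define $S_2(\Gamma_0(N))^{\text{old}}$ to be the subspace spanned by the images of the inclusions $\iota_d\colon S_2(\Gamma_0(N'))\to S_2(\Gamma_0(N))$, $f(\tau)\mapsto f(d\tau)$, for all proper divisors $N'\mid N$ with $N'\neq N$ and all $d\mid N/N'$. Then $S_2(\Gamma_0(N))^{\new}$ is defined as its Petersson orthogonal complement, and one checks the well-known facts that both summands are stable under every $T_p$ with $p\nmid N$ (Hecke operators are self-adjoint at good primes) and under every $W_{p^e}$ with $p^e\|N$.

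Next I would invoke the strong multiplicity one theorem of Atkin--Lehner: on $S_2(\Gamma_0(N))^{\new}$ the commuting family $\{T_p:p\nmid N\}$ is simultaneously diagonalizable with simple joint spectrum, so each newform $f$ (normalized by $a_1=1$) is uniquely determined by its eigenvalues $\{a_p(f):p\nmid N\}$, and two distinct newforms (even at different levels) differ at infinitely many such primes. Each newform is automatically an eigenform for all $W_{p^e}$ since these commute with the $T_p$'s and act on a one-dimensional eigenspace; this gives the newclass structure.

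For the oldclasses, I would use that any oldform is a linear combination of forms $\iota_d(f)$ coming from newforms $f\in S_2(\Gamma_0(N'))^{\new}$ with $N'\mid N$ properly, and that $\iota_d$ intertwines the $T_p$-actions for $p\nmid N$. Hence all forms in the set $\{f(d\tau):d\mid N/N'\}$ share the same $T_p$-eigenvalues as $f$, and by the multiplicity one statement above these eigenvalues distinguish the set from every other such set (whether arising from another newform or from the newspace at level $N$). This pins down the oldclasses exactly as described.

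Finally, for the alternate Atkin--Lehner eigenbasis of each oldclass: the operators $\{W_{p^e}:p^e\|N\}$ commute with each other and with the $T_q$ for $q\nmid N$, so they act on the finite-dimensional oldclass $\text{span}\{f(d\tau):d\mid N/N'\}$ as a commuting family preserving the ($T_q$)-eigenspace; simultaneous diagonalization then yields the claimed basis. The only subtle step, and what I expect to be the main obstacle if one wanted a self-contained proof, is the strong multiplicity one and the identification of the oldspace as exactly the span of the $\iota_d$-images -- these are the deep inputs that require the full force of \cite{AL70}, and I would simply cite them.
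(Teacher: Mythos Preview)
Your proposal is correct and matches the paper's treatment: the paper does not supply its own proof of this proposition but simply cites it as a restatement of Theorem~5 of \cite{AL70}, which is exactly what you propose to do (with a helpful unpacking of how the pieces of Atkin--Lehner's theory assemble into the stated decomposition).
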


While we can access the $qn^{\text{th}}$ Fourier coefficients of an eigenform in $S_2(\Gamma_0(N))$ with $q\mid N$, it requires knowing which oldclass the form belongs to. If we have no a priori knowledge of this, then the task is less feasible. Since Jacquet-Langlands can produces $M-$old forms, we treat this issue by ignoring coefficients that are not coprime to $M$.

\section{Proof of modularity}\label{sec:proof}

Let $\beta$ be the isomorphism from $H_1(\XO, \C)$ to its dual $H_1(\XO, \C)^*$ induced by the (nondegenerate) signed intersection pairing, i.e.
\[\beta(\psi)(\psi'):=\Intpm{\psi}{\psi'},\]
for $\psi,\psi'\in H_1(\XO, \C)$. The action of the Hecke operators on $H_1(\XO, \C)^*$ is given by Section 5 of \cite{DemV13}. First, let $p\nmid DM$, and write
\[\Theta(p)=\bigcup_{i=1}^{p+1}\Ord^1\pi_i.\]
Let $\gamma\in\Gamma_{\Ord}$, and multiplication on the right by $\gamma$ permutes $\Theta(p)$. Therefore there is a unique permutation $\gamma^{*}$ of $\{1,2,\ldots,p+1\}$ for which
\[\pi_a\gamma=\delta_a\pi_{\gamma^{*}a},\]
for some $\delta_a\in\Gamma_{\Ord}$. The operator $T_p$ is given by
\[T_p(f)(\tl{\gamma}):=\sum_{\pi_a\in\Theta(p)}f(\tl{\delta_a}),\]
for $f\in H_1(\XO, \C)^*$.

Similarly, if $q^e\mid\mid DM$, the Atkin-Lehner operator $W_q$ is given by
\[W_q(f)(\tl{\gamma}):=f(\ell_{\gamma^{\omega}}),\]
where $\omega\in\Ord^{q^e}$ normalizes $\Ord$.

The composition $\beta\circ\eta$ is a map from $\C[C(\GOPH)]$ to $H_1(\XO, \C)^*$, with Hecke operators defined on each end.

\begin{lemma}
The map $\beta\circ\eta$ is Hecke-equivariant for $T_p$ with $p\nmid DM$ and $W_q$ for $q\mid D$.
\end{lemma}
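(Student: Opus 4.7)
The plan is to reduce the equivariance to a homological identity, and then verify that identity orbit-by-orbit. By Lemma \ref{lem:alphasurjective}, elements of the form $\eta=\alpha([\sigma])=\tilde{\ell}_{\tau,\gamma\tau}$ with $\gamma=\sigma(\epsilon_{D_\sigma})$ span $H_1(\XO,\mathbb{C})$, so it suffices to check the equality of cohomology classes on such $\eta$. Unwinding the definitions and invoking the self-adjointness of $T_p$ from Proposition \ref{prop:heckeselfadjoint} on the left-hand side gives
\[(\beta\circ\alpha)(T_p[\phi])(\eta)=\Intpm{[\phi]}{T_p[\sigma]},\qquad T_p((\beta\circ\alpha)[\phi])(\eta)=\sum_{a=1}^{p+1}\Intpm{[\phi]}{\tilde{\ell}_{\tau,\delta_a\tau}},\]
where $\pi_a\gamma=\delta_a\pi_{\gamma^{*}a}$. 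Consequently the equivariance reduces to the homological identity
\[\alpha(T_p[\sigma])=\sum_{a=1}^{p+1}\tilde{\ell}_{\tau,\delta_a\tau}\quad\text{in }H_1(\XO,\mathbb{C}).\]

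To prove this identity, I would decompose $\{1,\ldots,p+1\}$ into orbits under the permutation $\gamma^{*}$. For a single orbit of length $k$ with ordered representatives $a_1,a_2,\ldots,a_k$, the homological identity $\tilde{\ell}_{\tau,\beta_1\tau}+\tilde{\ell}_{\tau,\beta_2\tau}=\tilde{\ell}_{\tau,\beta_1\beta_2\tau}$ for $\beta_i\in\Gamma_{\Ord}$ (which follows because $\beta_1$ preserves orientation on $\XO$) telescopes the orbit contribution to $\tilde{\ell}_{\tau,\pi_{a_1}\gamma^k\pi_{a_1}^{-1}\tau}$. Writing the interior element as $(\sigma^{\pi_{a_1}})_{\Ord}(\epsilon_{D'}^{k'})$, where $D'=D^{\sigma^{\pi_{a_1}}}_{\Ord}$ is the discriminant of the associated optimal order and $k'$ is the integer satisfying $\epsilon_{D_\sigma}^k=\epsilon_{D'}^{k'}$, this contribution becomes $k'\,\tilde{\ell}_{(\sigma^{\pi_{a_1}})_{\Ord}}$ in homology. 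Matching the resulting per-orbit coefficient $k'=k\log\epsilon_{D_\sigma}/\log\epsilon_{D'}$ against the contribution $\sum_{\pi\in O}\log\epsilon_{D_\sigma}/\log\epsilon_{D^{\sigma^{\pi}}_{\Ord}}$ coming from Proposition \ref{prop:alternateheckeexpression} verifies the identity orbit by orbit.

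For $W_p$ with $p\mid\mathfrak{D}$, Lemma \ref{lem:atkinlehnerbasic} gives $\Theta(p)=\{\omega_p\}$, and $\omega_p$ descends to an orientation-preserving involution of $\XO$ that preserves the signed intersection pairing. Using $W_p^2=\mathrm{id}$ on $H_1$ (so that $\tilde{\ell}_{\tau,\gamma^{\omega_p}\tau}$ and $\tilde{\ell}_{\tau,\gamma^{\omega_p^{-1}}\tau}$ are homologous) together with the self-adjointness of $W_p$ recorded in Remark \ref{rem:qweightalsoworks}, the same reduction applies almost verbatim. The hardest part is the combinatorics of the orbit-telescoping step: one must verify that the orbit lengths of $\gamma$ on $\Theta(p)$ precisely encode the tower of orders $\mathcal{O}_{D'}\subseteq\mathcal{O}_{D_\sigma}$ and the induced fundamental-unit exponents, which is essentially the content of the interplay between Propositions \ref{prop:epsilontower}, \ref{prop:heckedivision}, and \ref{prop:alternateheckeexpression}.
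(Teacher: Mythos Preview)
Your proposal is correct and follows essentially the same route as the paper: both reduce to the homological identity $\alpha(T_p[\sigma])=\sum_a\tilde{\ell}_{\tau,\delta_a\tau}$ via self-adjointness (Proposition~\ref{prop:heckeselfadjoint}) and the alternate expression (Proposition~\ref{prop:alternateheckeexpression}), then verify it by decomposing $\gamma^*$ into cycles and telescoping. The only point the paper makes more explicit is the verification that a cycle of length $r>1$ forces the fundamental unit of $\sigma^{\pi_{a_1}}_{\Ord}$ to be exactly $\epsilon_{D_\sigma}^{r}$ (by checking $\pi_{a_1}\gamma^j\pi_{a_1}^{-1}\notin\Ord$ for $1\le j<r$), whereas you defer this to the cited propositions; and for $W_p$ with $p\mid\mathfrak{D}$ the paper simply observes it is immediate from the definitions, so your appeal to $W_p^2=\mathrm{id}$ and Remark~\ref{rem:qweightalsoworks} is more than is needed.
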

\begin{proof}
The case of $W_q$ for $q\mid D$ follows directly from the definitions.

Next, consider $T_p$ for $p\nmid DM$. Let $\sigma,\gamma\in\GOPH$, and then
\[T_p(\beta\circ\eta([\sigma]))(\tl{\gamma})=\sum_{\pi_a\in\Theta(p)}\Intpm{\tl{\sigma}}{\tl{\delta_a}},\]
where $\pi_a\gamma=\delta_a\pi_{\gamma^{*}a}$.

Applying $T_p$ to $[\phi]$ first gives
\begin{align*}
\beta\circ\eta(T_p([\sigma]))(\tl{\gamma})= & \Intpm{\eta(T_p[\sigma])}{\eta([\gamma])}\\
= & \Intpm{\eta([\sigma])}{\eta(T_p[\gamma])}\\
= & \sum_{\pi_a\in\Theta(p)}\Intpm{\tl{\sigma}}{\tl{\pi_a\gamma\pi_a^{-1}}},
\end{align*}
where we used Propositions \ref{prop:heckeselfadjoint} and \ref{prop:heckeonhomology} in the second and third lines respectively. Thus it suffices to prove that in homology,
\[\sum_{\pi_a\in\Theta(p)}\tl{\delta_a}=\sum_{\pi_a\in\Theta(p)}\tl{\pi_a\gamma\pi_a^{-1}}.\]

Consider $\delta_a=\pi_a\gamma\pi_{\gamma^{*}a}^{-1}$, and note that if $a_1,a_2,\ldots,a_r$ is a sequence, then
\[\sum_{i=1}^r\tl{\delta_{a_i}}=\tl{\delta_{a_1}\delta_{a_2}\cdots\delta_{a_r}}.\]
Decompose the permutation $\gamma^{*}$ into cycles, and say $(a_1,a_2,\ldots,a_r)$ is one such cycle. The intermediate terms all cancel, and we derive
\[\delta_{a_1}\delta_{a_2}\cdots\delta_{a_r}=\pi_{a_1}\gamma^r\pi_{a_1}^{-1}.\]
Therefore
\[\sum_{i=1}^r\tl{\delta_{a_i}}=r\tl{\pi_{a_1}\gamma\pi_{a_1}^{-1}}.\]
Repeat this for all cyclic shifts of $(a_1,a_2,\ldots,a_r)$ to derive
\[\sum_{i=1}^r\tl{\delta_{a_i}}=\sum_{i=1}^r\tl{\pi_{a_i}\gamma\pi_{a_i}^{-1}}.\]
Adding this up over all cycles gives the desired result.
\end{proof}

At last, we are ready to tackle modularity.

\begin{proposition}
Let $\alpha_1, \alpha_2\in\C[\Ep]$. Then there exists a modular form $E\in S_2(\Gamma_0(DM))^{D-\new}$ such that the $n$\textsuperscript{th} coefficient of $E$ equals $\Intpm{\alpha_1}{T_n\alpha_2}$ for all $n$ coprime to $M$.
\end{proposition}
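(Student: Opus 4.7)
The plan is to construct $E$ explicitly using Jacquet--Langlands and then verify coefficient agreement via Hecke eigenvalue computations. Since $\gamma, \gamma'$ lie in the eigenspace of some $V_m$ with $m \mid \mathfrak{M}$ corresponding to a common eigensystem $T_p \mapsto a_p$ (for $p \nmid \mathfrak{D}\mathfrak{M}$) and $W_p \mapsto a_p$ (for $p \mid \mathfrak{D}$), Jacquet--Langlands produces a normalized newform $f \in S_2(\Gamma_0(\mathfrak{D}m))^{\text{new}}$ with the same eigensystem. I would take $E := f$, viewed as an element of $S_2(\Gamma_0(\mathfrak{D}\mathfrak{M}))^{\mathfrak{D}-\mathrm{new}}$ (it is new at every prime dividing $\mathfrak{D}$).

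Set $c_n := \Intpm{\gamma}{T_n \gamma'}$. The normalization $\Intpm{\gamma}{\gamma'} = 1$ gives $c_1 = 1 = a_1(E)$. For any prime $p$ coprime to $\mathfrak{M}$ one has $c_p = \Intpm{\gamma}{T_p \gamma'} = a_p \Intpm{\gamma}{\gamma'} = a_p$, using either the eigenvalue relation at good primes or, for $p \mid \mathfrak{D}$, the fact that $T_p = W_p$ and Remark \ref{rem:JLatkinlehner} (so the sign convention is the one that yields $a_p$ directly, not $\lambda(p) = -a_p$). Thus $c_p = a_p(E)$ for every prime $p$ coprime to $\mathfrak{M}$.

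To extend to prime powers, use Lemma \ref{lem:heckeworks}: for $p \nmid \mathfrak{D}\mathfrak{M}$, self-adjointness of $T_p$ combined with $T_{p^{k+1}} = T_p T_{p^k} - p T_{p^{k-1}}$ yields
\[
c_{p^{k+1}} = \Intpm{\gamma}{T_p T_{p^k} \gamma'} - p\, c_{p^{k-1}} = a_p c_{p^k} - p\, c_{p^{k-1}},
\]
matching Proposition \ref{prop:classicalfcoeffs}(i)(b). For $p \mid \mathfrak{D}$, Lemma \ref{lem:atkinlehnerbasic} gives $T_{p^k} = T_p^k$ and $T_p^2 = \Id$, so $c_{p^k} = a_p^k = a_p(E)^k$, matching Proposition \ref{prop:classicalfcoeffs}(ii)(b) (noting that $a_p = \pm 1$ since $W_p^2 = \Id$). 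Finally, $T_{mn} = T_m T_n$ for coprime $m,n$ (both coprime to $\mathfrak{M}$) gives $c_{mn} = c_m c_n$ by the same self-adjointness argument, so by induction $c_n = a_n(E)$ for every $n$ coprime to $\mathfrak{M}$.

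The only real subtlety to watch out for is the Atkin--Lehner sign convention at primes $p \mid \mathfrak{D}$: one must invoke Remark \ref{rem:JLatkinlehner} to confirm that the Hecke-equivariant isomorphisms line the $W_p$-eigenvalue of the quaternionic side up with $+a_p$ rather than with $\lambda(p) = -a_p$. Everything else is a routine verification that both sequences $(c_n)$ and $(a_n(E))$ satisfy the same multiplicative recurrences with matching seeds.
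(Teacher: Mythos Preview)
Your argument is correct and follows essentially the same route as the paper: identify $E$ with the newform $f$ attached to the common eigensystem, check $c_p=a_p$ for primes $p\nmid\mathfrak{M}$ (invoking Remark~\ref{rem:JLatkinlehner} for $p\mid\mathfrak{D}$), and then propagate to all $n$ coprime to $\mathfrak{M}$ via Lemma~\ref{lem:heckeworks} and Proposition~\ref{prop:classicalfcoeffs}. One small remark: you do not actually need self-adjointness for the recurrence step, since $\gamma'$ is already an eigenvector for every $T_p$ and the $T_n$ commute; in particular $T_pT_{p^k}\gamma'=T_{p^k}T_p\gamma'=a_pT_{p^k}\gamma'$ gives $\Intpm{\gamma}{T_pT_{p^k}\gamma'}=a_pc_{p^k}$ directly, and likewise $c_{mn}=c_mc_n$ follows because $\gamma'$ is a simultaneous eigenvector for $T_m$ and $T_n$.
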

\begin{proof}

By combining $\beta\circ\eta\circ\kappa$, Eichler-Shimura, and Jacquet-Langlands,  we have an Hecke-equivariant isomorphism
\[\C[\Ep]\simeq\C[C(\GOPH)]\simeq H_1(\XO, \C)\simeq H_1(\XO, \C)^*\simeq S_2^B(\Ord)\oplus\overline{S_2^B(\Ord)}\simeq S_2(DM)^{D-\new}\oplus\overline{S_2(DM)}^{D-\new}.\]
The eigenvalues of $\overline{S_2(DM)}^{D-\new}$ are complex conjugates of the eigenvalues of $S_2(DM)^{D-\new}$, but since this space is fixed under $\text{Gal}(\overline{\Q}/\Q)$, we can pair them up. In particular, by Proposition \ref{prop:classmfdecomp} there exists a decomposition,
\[\C[\Ep]=\oplus_{m\mid M} V_m,\]
where each $V_m$ can be decomposed into eigenspaces corresponding to the eigensystems for newforms on $S_2(\Gamma_0(Dm))$, $m\mid M$.
Each eigenspace of $V_m$ can then be decomposed into a basis of eigenforms for all $T_p$ with $p\nmid DM$ and $W_q$ for $q\mid D$.

Assume that $\alpha_1,\alpha_2\in\C[\Ep]$ are basis elements, corresponding to $V_{m_1}, V_{m_2}$ respectively, as well as to the eigensystems $T_p\alpha_i=a_p\alpha_i$, $W_q\alpha_i=a_q\alpha_i$ for $i=1,2$. By Proposition \ref{prop:classmfdecomp}, if these are distinct eigensystems, there exists a $p\nmid DM$ with $a_p\neq a_p'$. Then
\[a_p\Intpm{\alpha_1}{\alpha_2}=\Intpm{T_p\alpha_1}{\alpha_2}=\Intpm{\alpha_1}{T_p\alpha_2}=a_p'\Intpm{\alpha_1}{\alpha_2},\]
whence $\Intpm{\alpha_1}{\alpha_2}=0$. Therefore the only way for this pairing to be non-zero is if $m_1=m_2$ and $a_p=a_p'$ for all $p$. Assume this, and for simplicity assume that the elements are normalized so that $\Intpm{\alpha_1}{\alpha_2}=1$.

Let $E$ correspond to the modular form with coefficients $a_p$, and let $c_n=\Intpm{\alpha_1}{T_n\alpha_2}$. If $p\nmid DM$, then as above, $c_p=a_p$. If $q\mid D$, then $c_q=a_q$ follows from Remark \ref{rem:JLatkinlehner}. Therefore, by combining Proposition \ref{prop:heckeworks} with Proposition \ref{prop:classicalfcoeffs}, it follows that $c_n=a_n$ for all $n$ coprime to $M$.

The result for general $\alpha_i$ follows from writing each element in terms of the basis, which is orthonormal with respect to the signed intersection number.
\end{proof}

Given a modular form in $S_2(\Gamma_0(DM))^{D-\new}$, we can bump up the level to $DM^2$ to only eliminate all coefficients not coprime to $M$ (see for example Proposition 2.4 of \cite{Rib80}). In particular, Theorem \ref{thm:mainthm} follows immediately from this.

\begin{remark}\label{rem:m2q}
When $D=1$, then we are initially working with the open curve $Y_{O}=\Gamma_{\Ord}\backslash\mathbb{H}$. In this case, Poincar\'{e} duality (via the map $\beta$) instead lands in the cohomology of the closed curve $\XO$, relative to the cusps. Eichler-Shimura gives the isomorphism to $S_2(\Gamma_0(M))\oplus\overline{S_2(\Gamma_0(M))}$, as desired.
\end{remark}

\begin{remark}\label{rem:levelDM}
In Section \ref{sec:atkinlehner} we defined $W_q$ for $q^e\mid\mid M$, and most of the subsequent theory still works with this operator. The difficulty comes in picking up the coefficients $a_q$, since the action of $W_q$ on an oldform in $S_2(\Gamma_0(DM))$ does not pick up $a_q$ (let alone $a_{q^n}$). For example, for a newform in $S_2(\Gamma_0(D))$, $W_q$ would need to act like the Hecke operator $T_q$ acting on this space, which does not seem viable.

One alternate way to treat this would be to also consider the superorders $\Ord'\supseteq\Ord$, and form a linear combination of the intersection series for all such superorders. This would allow access to the Hecke operators acting on $S_2(\Gamma_0(Dm))$ for all $m\mid M$, and may allow us to pick up all coefficients.
\end{remark}

\section{Examples}\label{sec:examples}

Algorithms to compute intersection numbers and the action of $T_n$ on optimal embeddings were implemented in PARI/GP, \cite{PARI}. Using these algorithms, we produce a few examples that demonstrate that the modular form corresponding to $\IS_{\phi_1,\phi_2}^{\pm}$ can be non-trivial, does not need to be an eigenform, and does not need to be $M-$new.

The labels of newforms correspond to the labels given in LMFDB (\cite{lmfdb}). The code to generate these examples can be found in the file ``intersectionseries.gp'' in the package \cite{Qquad}.

For a first example, we consider a situation where we get a combination of newforms, so the resulting form is not an eigenform.
\begin{example}
Let $B=\left(\frac{7,5}{\Q}\right)$ be ramified at $5$ and $7$, and let $\Ord$ be the maximal order spanned by $\left\{1,i,\frac{1+j}{2},\frac{i+k}{2}\right\}$. Thus $D=35$, $M=1$, and the dimension of weight two newforms on $\Gamma_0(35)$ is $3$. Label the forms $f,g,\overline{g}$, where $f$ is given by \textsf{35.2.a.a} in LMFDB, and $g$ is given by \textsf{35.2.a.b}. The coefficients of $g$ are given in terms of $\beta=\frac{1+\sqrt{17}}{2}$, and the first few coefficients of $f,g$ are given by
\[f(\tau)=q^1+q^{3}- 2q^{4} - q^{5} + q^{7} - 2q^{9}+O(q^{11}),\]
and
\[g(\tau)= q^1 -\beta q^{2} + ( -1 + \beta ) q^{3} + ( 2 + \beta ) q^{4} + q^{5} -4 q^{6} - q^{7} + ( -4 - \beta ) q^{8} + ( 2 - \beta ) q^{9} -\beta q^{10}+O(q^{11}).\]

Take the optimal embeddings of discriminants $5,12$ given by
\[\phi_1\left(\frac{1+\sqrt{5}}{2}\right)=\frac{1-j}{2},\qquad \phi_2\left(\frac{\sqrt{12}}{2}\right)=\frac{-i-8j+3k}{2}.\]
We compute 
\[\IS_{\phi_1,\phi_2}^{\pm}=q^{2}-q^{3}-q^{4}+q^{8}+q^{9}+q^{10}+O(q^{11}).\]
By matching the coefficients, we have
\[\IS_{\phi_1,\phi_2}^{\pm}=\dfrac{-g(\tau)+\overline{g}(\tau)}{\sqrt{17}}.\]

Next, take the optimal embedding of discriminant $173$ given by 
\[\phi_3\left(\frac{1+\sqrt{173}}{2}\right)=\frac{1-2i+27j+10k}{2}.\]
We compute
\[\IS_{\phi_2,\phi_3}^{\pm}=2q^{1}-q^{2}+3q^{4}+q^{5}-6q^{6}-q^{7}-7q^{8}+q^{9}-q^{10}+O(q^{11}),\]
whence
\[\IS_{\phi_2,\phi_3}^{\pm}=\dfrac{1}{2}f(\tau)+\dfrac{51+\sqrt{17}}{68}g(\tau)+\dfrac{51-\sqrt{17}}{68}\overline{g}(\tau).\]
\end{example}

Next, consider a non-maximal Eichler order.

\begin{example}\label{ex:modform2}
Let $B=\left(\frac{7,-1}{\Q}\right)$ be ramified at $2,7$, and let $\Ord$ be the Eichler order of level $3$ spanned by $\left\{1,i,3j,\frac{1+i+j+k}{2}\right\}$. Thus $D=14$, $M=3$, and the dimensions of the space of weight two newforms on each of $\Gamma_0(14)$ and $\Gamma_0(42)$ is $1$. Let the eigenforms be $f,g$ respectively, so that $f$ is given by the label \textsf{14.2.a.a} in LMFDB, and $g$ is \textsf{42.2.a.a}. The first few terms are given by
\[f(\tau)=q^{1}-q^{2}-2q^{3}+q^{4}+2q^{6}+q^{7}-q^{8}+q^{9}-2q^{12}-4q^{13}-q^{14}+q^{16}+O(q^{17}),\]
and
\[g(\tau)=q^{1}+q^{2}-q^{3}+q^{4}-2q^{5}-q^{6}-q^{7}+q^{8}+q^{9}-2q^{10}-4q^{11}-q^{12}+6q^{13}-q^{14}+2q^{15}+q^{16}+O(q^{17}).\]

Take the embeddings of discriminants $13,24$ given by
\[\phi_1\left(\frac{1+\sqrt{13}}{2}\right)=\frac{1+i+j+k}{2},\qquad \phi_2\left(\frac{\sqrt{24}}{2}\right)=-j-k.\]
We compute 
\[\IS_{\phi_1,\phi_2}^{\pm}=-q^{1}+q^{2}-q^{4}-q^{7}+q^{8}+4q^{13}+q^{14}-q^{16}+O(q^{17}).\]
By matching the coefficients of $q^1,q^2$, we have
\[\IS_{\phi_1,\phi_2}^{\pm}=-f(\tau)+R(q^3),\]
for some power series $R$. An equality of modular forms can be achieved by bumping up the level to access $f(9q)$, and using this to erase all coefficients of $q^{3n}$:
\[\IS_{\phi_1,\phi_2}^{\pm}=-f(\tau)-2f(3\tau)-3f(9\tau).\]

Finally, we demonstrate an example where the old and newforms are non-trivially combined. Let $D_3=45$, let $\phi_3\left(\frac{1+\sqrt{45}}{2}\right)=\frac{1+3i-5j+k}{2}$, and
\[\IS_{\phi_2,\phi_3}^{\pm}=q^{1}+q^{4}-q^{5}-q^{10}-2q^{11}+q^{13}-q^{14}+q^{16}+O(q^{17}).\]
Matching coefficients gives
\[\IS_{\phi_2,\phi_3}^{\pm}=\dfrac{f(\tau)+g(\tau)}{2}+R(q^3),\]
for some power series $R$.
\end{example}

\bibliographystyle{alpha}
\bibliography{../references}
\end{document}